\date{March 11, 2016}
\newtheorem{theorem}{Theorem}
\newtheorem{proposition}{Proposition}
\newtheorem{lemma}[proposition]{Lemma}
\theoremstyle{definition}
\theoremstyle{remark}
\newtheorem{remark}[proposition]{Remark}
\newcommand\R{{\ensuremath {\mathbb R} }}
\newcommand\C{{\ensuremath {\mathbb C} }}
\newcommand\N{{\ensuremath {\mathbb N} }}
\newcommand\Z{{\ensuremath {\mathbb Z} }}
\renewcommand\phi{\varphi}
\renewcommand\le{\leqslant}
\renewcommand\ge{\geqslant}
\renewcommand\epsilon{\varepsilon}
\renewcommand\hat{\widehat}
\renewcommand\bar{\overline}
\newcommand{\gH}{\mathfrak{H}}
\newcommand{\gS}{\mathfrak{S}}
\newcommand\ii{{\ensuremath {\infty}}}
\newcommand{\norm}[1]{ \left| \! \left| #1 \right| \! \right| }
\newcommand{\cF}{\mathcal{F}}
\newcommand{\cR}{\mathcal{R}}
\newcommand\1{{\ensuremath {\mathds 1} }}
\DeclareMathOperator{\tr}{Tr}
\title{The Stein-Tomas inequality in trace ideals}
\author{Rupert L. Frank}
\address{Rupert L. Frank, Mathematics 253-37, Caltech, Pasadena, CA 91125, USA}
\email{rlfrank@caltech.edu}
\author[J. Sabin]{Julien SABIN}
\address{J. Sabin, Laboratoire de Mathématiques d'Orsay, Univ. Paris-Sud, CNRS, Université
Paris-Saclay, 91405 Orsay, France.} 
\email{julien.sabin@math.u-psud.fr}
\date{}
\begin{document}
 
\begin{abstract}
 The goal of this review is to explain some recent results \cite{FraSab-14} regarding generalizations of the Stein-Tomas (and Strichartz) inequalities to the context of trace ideals (Schatten spaces).
\end{abstract}

\maketitle

\section*{Introduction}

As a general question in Fourier analysis, one wants to understand how the Fourier transform acts on $L^p(\R^N)$, when $1<p<2$ and $N\ge1$. When $f\in L^p(\R^N)$ with $p=1$, the Fourier transform $\hat{f}$ of $f$ is a continuous function on $\R^N$ vanishing at infinity, while for $p=2$ it is merely a square integrable function by Plancherel's theorem. In particular, $\hat{f}$ may be restricted to any given set of zero Lebesgue measure in $\R^N$ in a meaningful way when $p=1$, a property which is not true anymore when $p=2$ since $\hat{f}$ is only defined almost everywhere in this case. It is then a natural question to ask whether this property persists for some $p>1$, even though the Hausdorff-Young inequality implies that $\hat{f}$ belongs to $L^{p'}(\R^N)$ in this case (where we denoted by $p'$ the dual exponent of $p$) and thus is also only defined almost everywhere a priori. A positive answer to this question for some particular zero measure sets has been provided by the striking result of Stein and Tomas.

\begin{theorem}[Stein \cite{Stein-86}-Tomas \cite{Tomas-75}]\label{thm:stein-tomas}
  Let $N\ge2$ and $S\subset\R^N$ a compact hypersurface with non-vanishing Gauss curvature. Then, for any $1\le p\le\frac{2(N+1)}{N+3}$, there exists $C>0$ such that for all $f\in (L^1\cap L^p)(\R^N)$, we have
  \begin{equation}\label{eq:stein-tomas}
      \norm{\hat{f}_{\,|S}}_{L^2(S)}\le C\norm{f}_{L^p(\R^N)},
  \end{equation}
  where $S$ is endowed with its Lebesgue measure. Furthermore, the exponent $p=\frac{2(N+1)}{N+3}$ is optimal in the sense that \eqref{eq:stein-tomas} is wrong for any $p>\frac{2(N+1)}{N+3}$. 
\end{theorem}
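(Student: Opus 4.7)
The plan is to use the $TT^*$ method to reduce \eqref{eq:stein-tomas} to a convolution inequality, and then to prove the latter via Stein's complex interpolation, with the decay of $\hat{d\sigma}$ provided by the method of stationary phase as the main analytic input.

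First, by duality, the restriction estimate \eqref{eq:stein-tomas} for $Rf := \hat{f}\,|_S$ is equivalent to the dual extension estimate $\norm{\hat{g\, d\sigma}}_{L^{p'}(\R^N)} \le C \norm{g}_{L^2(S)}$. The standard $TT^*$ identity $\|R\|^2 = \|R^*R\|$ then further reduces matters to the convolution inequality
\begin{equation*}
\norm{\hat{d\sigma} * f}_{L^{p'}(\R^N)} \le C \norm{f}_{L^p(\R^N)}.
\end{equation*}

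Second, the hypothesis of non-vanishing Gauss curvature on $S$, combined with a smooth partition of unity localizing $S$ into graph patches, allows a patchwise application of the method of stationary phase, yielding the pointwise decay
\begin{equation*}
|\hat{d\sigma}(x)| \le C(1+|x|)^{-(N-1)/2}, \quad x \in \R^N.
\end{equation*}

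Third, to prove the convolution inequality at the sharp endpoint $p = 2(N+1)/(N+3)$, I would introduce an analytic family $\{\mu_z\}_{z \in \C}$ of tempered distributions with $\mu_{z_0} = d\sigma$ at an interior point $z_0$ of a suitable vertical strip, such that at one boundary line $\hat{\mu_z} \in L^\infty$ uniformly in $|\im z|$ (giving $L^1 \to L^\infty$ boundedness of convolution with $\hat{\mu_z}$), and at the other boundary line $\mu_z \in L^\infty$ uniformly as a Fourier multiplier (giving $L^2 \to L^2$ boundedness by Plancherel). Stein's interpolation theorem applied to the operator family $T_z f = \hat{\mu_z} * f$ at $z = z_0$ then yields the desired $L^p \to L^{p'}$ bound with $1/p - 1/p' = 2/(N+1)$, that is, $p = 2(N+1)/(N+3)$. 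Optimality of this exponent is obtained by the Knapp example: testing the extension inequality with $g = \ind_{C_\delta}$ for a spherical cap $C_\delta \subset S$ of radius $\delta$, one checks that $\hat{g\, d\sigma}$ has size of order $\delta^{N-1}$ on a dual tube of volume $\delta^{-(N+1)}$, so letting $\delta \to 0$ forces $p \leq 2(N+1)/(N+3)$.

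The main technical obstacle is closing the argument at the endpoint itself: a dyadic decomposition of $\hat{d\sigma}$ into annular pieces and termwise real interpolation between $L^1 \to L^\infty$ (from the decay) and $L^2 \to L^2$ (trivial on each annulus) works only for $p$ strictly less than $2(N+1)/(N+3)$, since the resulting series diverges logarithmically at the critical exponent. Stein's complex interpolation, exploiting an analytic family rather than a single operator, supplies exactly the cancellation needed to reach the sharp threshold.
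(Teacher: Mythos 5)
Your proposal follows exactly the route the paper itself sketches: reduce via $TT^*$ to the convolution inequality for $T_S f = \hat{d\sigma}*f$, obtain the $(1+|x|)^{-(N-1)/2}$ decay from stationary phase, and recover the sharp endpoint $p=2(N+1)/(N+3)$ via Stein's complex interpolation with an analytic family $\{\mu_z\}$ satisfying $\mu_{z_0}=d\sigma$, an $L^1\to L^\infty$ bound on one boundary line and an $L^2\to L^2$ bound on the other. Your added observation that a dyadic real-interpolation argument fails logarithmically at the endpoint, and your Knapp-cap computation for sharpness, are both correct and in the spirit of the paper's discussion.
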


Theorem \ref{thm:stein-tomas} provides a distinction between the Fourier transform on $L^p(\R^N)$ whether $p<2(N+1)/(N+3)$ or not: when it is the case, the range of the Fourier transform on $L^p(\R^N)$ lies within a subset of $L^{p'}(\R^N)$ in which functions may be restricted to any curved compact surface in a $L^2$-sense,  a property which does not hold when $p>2(N+1)/(N+3)$. 

The restriction property of Stein and Tomas is stated in terms of square integrable functions on the surface $S$. One may wonder whether the exponent $p$ can be increased if we replace $L^2(S)$ by the larger space $L^1(S)$ in \eqref{eq:stein-tomas}: this is the content of the famous Stein-Tomas conjecture, which states that one can go up to $p<2N/(N+1)$ in this case. For now, this conjecture has been proved only in $N=2$ when $S$ is a circle and is open in any other case. One may consult the review article \cite{Tao-04} for informations on this conjecture.

It is perhaps surprising that curvature plays a role in these properties of the Fourier transform. However, examining the case when $S$ is flat, for instance if $S$ is a portion of the hyperplane $\{\xi\in\R^N,\, \xi_1=0\}$, one sees that the function
$$f(x)=\frac{1}{1+|x_1|}\chi(x_2,\ldots,x_N),\,x\in\R^N,$$
with $\chi\in C^\ii_0(\R^N)$, belongs to $L^p(\R^N)$ for any $p>1$ (but not $p=1$), and has a Fourier transform which blows up on $S$; hence the property \eqref{eq:stein-tomas} can only hold for $p=1$ for a flat surface (to rigorously negate the inequality \eqref{eq:stein-tomas}, one has to consider a well-chosen sequence of integrable functions approaching this $f$ in $L^p(\R^N)$). 

The reason why curvature indeed implies restriction properties of the Fourier transform for $p>1$ is actually related to the choice of the space $L^2(S)$ in \eqref{eq:stein-tomas}. Denoting by $\cR_S$ the restriction operator $\cR_Sf=\hat{f}_{|S}$, Theorem \ref{thm:stein-tomas} says that the linear operator $\cR_S$ extends to a bounded operator from $L^p(\R^N)$ to $L^2(S)$. Using the Hilbert space structure of $L^2(S)$, this is equivalent to the boundedness of the translation-invariant operator $T_S:=(\cR_S)^*\cR_S$ from $L^p(\R^N)$ to $L^{p'}(\R^N)$. A straightforward computation shows that $T_S f=f*\hat{d\sigma}$, where
$$\hat{d\sigma}(x):=\int_S e^{ix\cdot\xi}d\sigma(\xi),\,\forall x\in\R^N,$$
$d\sigma$ denoting the Lebesgue measure on $S$. The curvature of $S$ then implies by a stationary phase argument the decay estimate
$$|\hat{d\sigma}(x)|\le C(1+|x|)^{-\frac{N-1}{2}},$$
which, using the Hardy-Littlewood-Sobolev inequality, leads to Theorem \ref{thm:stein-tomas} in the range 
$1\le p\le 4N/(3N+1)$, a range strictly smaller than the optimal one $1\le p\le 2(N+1)/(N+3)$. It is due to the fact that $\hat{d\sigma}$ is not any function which decays like $|x|^{-(N-1)/2}$ at infinity: one also has to use that its Fourier transform is supported on a $(N-1)$-dimensional object. Using this additional information, Tomas \cite{Tomas-75} was able to obtain Theorem \ref{thm:stein-tomas} in the subcritical range $1\le p<2(N+1)/(N+3)$, and then Stein \cite{Stein-86} obtained the full optimal range using a complex interpolation argument (which we will explain in more details in Section \ref{sec:proof}).

An interesting extension to the Stein-Tomas theorem has been provided by Strichartz \cite{Strichartz-77}, who considered non-compact surfaces which are levels of quadratic forms. Of particular interest is the case of the paraboloid
$$S=\{(\omega,\xi)\in\R\times\R^d,\,\omega=-|\xi|^2\},$$
for some $d\ge1$, for which Strichartz obtained Theorem \ref{thm:stein-tomas} when $p=2(d+2)/(d+4)$, corresponding to the endpoint exponent $p=2(N+1)/(N+3)$ since $N=d+1$ in this case. Here, the measure on $S$ used to define the space $L^2(S)$ is obtained by pushing forward the Lebesgue measure on $\R^d$ by the map $\xi\in\R^d\mapsto(\xi,-|\xi|^2)\in S$, inducing an isometry between $L^2(S)$ and $L^2(\R^d)$. Using this identification, one may verify that for any $g\in L^2(\R^d)\simeq L^2(S)$ we have
$$(\cR_S)^*g(t,x)=(e^{it\Delta_x}\hat{g})(x),\,\forall (t,x)\in\R\times\R^d,$$
where $\Delta_x$ denotes the Laplace operator on $\R^d$. Hence, Strichartz' result implies the estimate 
$$\norm{e^{it\Delta_x}g}_{L^{2+4/d}_{t,x}(\R\times\R^d)}\le C\norm{g}_{L^2(\R^d)},\,\forall g\in L^2(\R^d),$$
which is of particular interest since the function $g(t,x)=e^{it\Delta_x}g$ is the unique solution to the Schr\"odinger equation $i\partial_t g=-\Delta_x g$ with initial data $g(0,\cdot)=g$. One may thus see Strichartz' estimate as a way to measure the smallness of $g(t,\cdot)$ for large times $t$, which is an expression of the dispersive properties of the linear Schr\"odinger equation. Strichartz obtained similar estimates when $S$ is a cone or a two-sheeted hyperboloid, which correspond respectively to the wave equation and to the Klein-Gordon equation. These kinds of dispersive estimates are a decisive ingredient in the study of non-linear versions of these equations. 

We have seen that Stein-Tomas-type theorems amount to proving the boundedness of the linear operator $T_S$ from $L^p(\R^N)$ to $L^{p'}(\R^N)$. By the H\"older inequality, this is equivalent to the boundedness of the operator $W_1T_SW_2$ on $L^2(\R^N)$ (where $W_1$ and $W_2$ are seen as multiplication operators), for any $W_1,W_2\in L^{2p/(2-p)}(\R^N)$ together with the bound
\begin{equation}\label{eq:stein-tomas-hilbert}
 \norm{W_1T_SW_2}_{L^2\to L^2}\le C\norm{W_1}_{L^{2p/(2-p)}(\R^N)}\norm{W_2}_{L^{2p/(2-p)}(\R^N)},
\end{equation}
for some $C>0$ independent of $W_1$ and $W_2$. In \cite{FraSab-14}, we obtain additional information on the operator $W_1T_SW_2$ beyond its mere boundedness: we show that it is actually a compact operator and that it belongs to certain Schatten classes (recall that a compact operator $A$ on a Hilbert space $\gH$ belongs to the Schatten class $\gS^\alpha$ (with $\alpha\ge1$) if the sequence of its singular values $(\mu_n(A))$ belongs to the sequence space $\ell^\alpha$, which induces a norm on $\gS^\alpha$ by $\norm{A}_{\gS^\alpha}=\norm{(\mu_n(A))}_{\ell^\alpha}$ \cite{Simon-79}). In other words, we upgrade the Stein-Tomas inequality \eqref{eq:stein-tomas-hilbert} to the stronger
\begin{equation}\label{eq:stein-tomas-schatten}
 \norm{W_1T_SW_2}_{\gS^\alpha}\le C\norm{W_1}_{L^{2p/(2-p)}(\R^N)}\norm{W_2}_{L^{2p/(2-p)}(\R^N)},
\end{equation}
with the optimal (that is, smallest; the Stein-Tomas inequality \eqref{eq:stein-tomas-hilbert} corresponding to $\alpha=\ii$) exponent $\alpha=\alpha(p)\ge1$ (see Theorem \ref{thm:main} for a precise statement). Since $T_S$ is a translation-invariant operator, it may be written as $T_S=f(-i\nabla)$ for some distribution $f$ (which here is supported on $S$). Inequalities of the form \eqref{eq:stein-tomas-schatten} are known to hold when $f$ belongs to some Lebesgue space, by results of Kato-Seiler-Simon \cite[Ch. 4]{Simon-79}. As a consequence, the inequality \eqref{eq:stein-tomas-schatten} may be seen as an extension of these results in the case when the distribution $f$ is singular (here, supported on a curved hypersurface). 

A motivation to look at trace ideals extensions to Stein-Tomas-type inequalities comes from a dual version of the inequality \eqref{eq:stein-tomas-schatten}: it is equivalent (see \cite[Lem. 3]{FraSab-14}) to the fact that for all orthonormal system $(f_j)$ in $L^2(S)$ and for any corresponding coefficients $(\nu_j)\subset\C$, one has the estimate
\begin{equation}\label{eq:stein-tomas-orthonormal}
  \norm{\sum_j\nu_j|(\cR_S)^*f_j|^2}_{L^{p'/2}(\R^N)}\le C\left(\sum_j|\nu_j|^{\alpha'}\right)^{1/\alpha'},
\end{equation}
for some $C>0$ independent of $(f_j)$ and $(\nu_j)$. Here, the exponent $\alpha'$ is dual to the exponent $\alpha$ appearing in \eqref{eq:stein-tomas-schatten}. Inequality \eqref{eq:stein-tomas-orthonormal} reduces to (the dual version of) \eqref{eq:stein-tomas} when the orthonormal system is reduced to one function (and the corresponding coefficient $\nu=1$). On the other hand, one cannot deduce \eqref{eq:stein-tomas-orthonormal} ``directly'' from \eqref{eq:stein-tomas} using the triangle inequality: this only leads to
$$\norm{\sum_j\nu_j|(\cR_S)^*f_j|^2}_{L^{p'/2}(\R^N)}\le\sum_j|\nu_j|\norm{(\cR_S)^*f_j}_{L^{p'}(\R^N)}^2\le C\sum_j|\nu_j|,$$
which is weaker than \eqref{eq:stein-tomas-orthonormal} if $\alpha'>1$. Hence, \eqref{eq:stein-tomas-orthonormal} may be seen as a generalization of \eqref{eq:stein-tomas} for systems of orthonormal functions, with an \emph{optimal} dependence on the number of such functions (that is, with an optimal exponent $\alpha$). Quantities appearing on the left-side of \eqref{eq:stein-tomas-orthonormal} arise in the context of many-body quantum mechanics: for instance, when $S$ is a paraboloid, the quantity
$$\sum_{j=1}^M|e^{it\Delta}u_j|^2$$
represents the spatial density at time $t$ of a system of $M$ fermions evolving freely in $\R^d$, the $j^{th}$ fermion having $u_j$ as wavefunction at time $t=0$ (the $M$-body wavefunction is then a \emph{Slater determinant} $u_1\wedge\cdots\wedge u_M$) . It is thus interesting to control such quantities for large times, but also for large $M$. We refer to \cite{LewSab-13a,LewSab-13b,Sabin-2014} for applications of such inequalities in the study of nonlinear PDEs modelling the evolution of infinite quantum systems. The inequality \eqref{eq:stein-tomas-orthonormal} can also be stated in a more concise way using the language of one-body density matrices \cite{FraSab-14}.

In the rest of the review, we state more precisely our results, prove some improvements which are new, and give an application to a refined Strichartz estimate. Finally, we provide elements of proof.

\section{Results}

\subsection{Summary}

We have generalized the results of Stein-Tomas in the compact case and the results of Strichartz in the non-compact case. Let us start with the compact case \cite[Thm. 2]{FraSab-14}.

\begin{theorem}\label{thm:main}
 Let $N\ge2$ and $S\subset\R^N$ a compact hypersurface with non-vanishing Gauss curvature. Then, for any $1\le p\le\frac{2(N+1)}{N+3}$ there exists $C>0$ such that for all $W_1,W_2\in L^{2p/(2-p)}(\R^N)$, we have 
 \begin{equation}\label{eq:stein-tomas-schatten-2}
    \norm{W_1T_SW_2}_{\gS^{\frac{(N-1)p}{2N-(N+1)p}}}\le C\norm{W_1}_{L^{2p/(2-p)}(\R^N)}\norm{W_1}_{L^{2p/(2-p)}(\R^N)}.
 \end{equation}
\end{theorem}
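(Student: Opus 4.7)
The plan is to adapt Stein's complex-interpolation proof of the classical Stein--Tomas theorem while keeping the weights $W_1,W_2$ in play throughout and tracking the Schatten norm of $W_1 T_S W_2$ rather than merely its operator norm. I treat two specific values of $p$ as endpoints---$p=1$ (an easy trace-class bound) and $p_\star:=2(N+1)/(N+3)$ (the critical endpoint, obtained by Stein's interpolation in an analytic family of multipliers)---and then obtain the full range by Schatten-class interpolation between them.

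For $p=1$ (where $\alpha(1)=1$ and $2p/(2-p)=2$) I factor $T_S=\cR_S^*\cR_S$, write $W_1 T_S W_2=(W_1\cR_S^*)(\cR_S W_2)$, and use H\"older's inequality for Schatten classes to reduce to the Hilbert--Schmidt bound $\norm{\cR_S W}_{\gS^2}^2=(2\pi)^{-N}|S|\,\norm{W}_{L^2}^2$, which is a direct kernel computation since $\cR_S W$ has integral kernel $(2\pi)^{-N/2}W(y)e^{-i\xi\cdot y}$ as an operator $L^2(\R^N)\to L^2(S)$. This gives $\norm{W_1 T_S W_2}_{\gS^1}\le C\norm{W_1}_{L^2}\norm{W_2}_{L^2}$.

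For the critical endpoint $p=p_\star$, I introduce a Stein analytic family of multipliers $m_z(\xi)=\frac{e^{z^2}}{\Gamma(z+1)}\chi(\xi)\rho(\xi)_+^z$, with $\chi$ a smooth cutoff supported near $S$ and $\rho$ a defining function for $S$, normalized so that $m_{-1}$ is a constant multiple of the surface measure on $S$; correspondingly $T_z:=m_z(-i\nabla)$ is an analytic family of operators with $T_{-1}$ essentially $T_S$. In parallel, after rescaling so that $\norm{W_j}_{L^{N+1}}=1$, I set $W_j(z):=|W_j|^{-z}\sgn(W_j)$, so that $W_j(-1)=W_j$, $|W_j(z)|\equiv 1$ on $\re z=0$, and $\norm{W_j(z)}_{L^2}=1$ on $\re z=-(N+1)/2$. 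On the line $\re z=0$ the multiplier $m_z$ is bounded with $\norm{m_z}_{L^\infty}\le Ce^{-(\im z)^2}$, hence $\norm{W_1(z)T_z W_2(z)}_{\gS^\infty}\le Ce^{-(\im z)^2}$. On the line $\re z=-(N+1)/2$, a stationary-phase analysis along $S$ (crucially using the non-vanishing Gauss curvature) gives $\norm{\widehat{m_z}}_{L^\infty}\le Ce^{-(\im z)^2}$, and the standard kernel computation then yields $\norm{W_1(z)T_z W_2(z)}_{\gS^2}\le Ce^{-(\im z)^2}$. The Schatten-valued Stein interpolation theorem applied at $z=-1$ (for which the interpolation parameter is $\theta=2/(N+1)$) produces $\norm{W_1 T_S W_2}_{\gS^{N+1}}\le C\norm{W_1}_{L^{N+1}}\norm{W_2}_{L^{N+1}}$, which is the claim at $p=p_\star$.

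The full range $1\le p\le p_\star$ then follows by complex interpolation of Schatten classes between the $p=1$ bound ($\gS^1$ with $L^2$ weights) and the critical-endpoint bound ($\gS^{N+1}$ with $L^{N+1}$ weights); with $\theta=2(N+1)(p-1)/(p(N-1))\in[0,1]$ the intermediate spaces work out to exactly $\gS^{(N-1)p/(2N-(N+1)p)}$ and $L^{2p/(2-p)}$, matching the statement. The main obstacle is the critical-endpoint step: one must construct the analytic family $m_z$ for a general curved compact hypersurface (not merely the sphere) and verify the kernel bound $\norm{\widehat{m_z}}_{L^\infty}\le Ce^{-(\im z)^2}$ on the line $\re z=-(N+1)/2$ with sub-exponential growth in $|\im z|$, so that the Schatten-valued version of Stein's interpolation theorem applies cleanly.
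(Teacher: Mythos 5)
Your proposal is correct and follows essentially the same strategy as the paper: the easy $\gS^1$ bound at $p=1$ from the Hilbert--Schmidt factorization $W_1 T_S W_2=(W_1\cR_S^*)(\cR_S W_2)$, a Stein analytic family of Fourier multipliers regularizing the surface measure (with the weights $W_j$ also made analytic as $W_j(z)=|W_j|^{-z}\sgn W_j$) interpolated between an operator-norm bound on $\re z=0$ and a Hilbert--Schmidt bound on the critical line to produce the endpoint $\gS^{N+1}$ estimate with $L^{N+1}$ weights at $p=p_\star$, and then Schatten/Lebesgue complex interpolation between the two endpoints to fill in the full range $1\le p\le p_\star$. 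This matches the paper's proof sketch (Section~\ref{sec:proof}), including the observation that the $L^1\to L^\infty$ bound on the multiplier's Fourier transform upgrades directly to the Hilbert--Schmidt bound for the sandwiched operator via the kernel formula.
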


The Schatten exponent $\alpha=(N-1)p/(2N-(N+1)p)$ in \eqref{eq:stein-tomas-schatten-2} is optimal (that is, the smallest possible). It is proved in \cite[Thm. 6]{FraSab-14}, where the density matrix formalism is exploited: the orthonormal system used to test the inequality \eqref{eq:stein-tomas-orthonormal} corresponds to the (unknown) eigenfunctions of an operator $\gamma_h$ on $L^2(S)$ with integral kernel
$$\gamma_h(\xi,\xi')=\int_{\R^N}\1(|x|^2\le h^{-2})e^{ix\cdot(\xi-\xi')}\,dx,\,(\xi,\xi')\in S\times S,$$
for a small 'semi-classical' parameter $h>0$. As another remark, we notice that the Schatten exponent in \eqref{eq:stein-tomas-schatten-2} is equal to 1 (that is, the smallest possible) when $p=1$. While this looks like the stronger result, it is actually straightforward to prove it since in this case $W_1,W_2\in L^2(\R^N)$ and hence the operator $W_1(\cR_S)^*$ has an integral kernel 
$$W_1(\cR_S)^*(x,\xi)=W_1(x)e^{ix\cdot\xi},\,(x,\xi)\in\R^N\times S$$
which is square integrable on $\R^N\times S$, meaning that it is Hilbert-Schmidt (i.e. $\gS^2$) from $L^2(S)$ to $L^2(\R^N)$ and hence $W_1T_SW_2=W_1(\cR_S)^*\cR_SW_2$ is trace-class (i.e. $\gS^1$). 

When $S$ is a level set of a quadratic form (and hence can be non-compact), which is the case considered by Strichartz \cite{Strichartz-77}, we also obtain similar estimates \cite[Thm. 3]{FraSab-14}. For the sake of clarity, let us state it only in the case of a paraboloid, even if the result holds for different kind of quadratic surfaces. In the special case of the paraboloid, we can actually go a little bit further than what is proved in Strichartz' article by using mixed (space-time) Lebesgue norms \cite[Thm. 9]{FraSab-14}.

\begin{theorem}\label{thm:schatten-strichartz-mixed}
  Let $d\ge1$ and $S$ be the paraboloid
  $$S:=\{(\omega,\xi)\in\R\times\R^d,\,\omega=-|\xi|^2\}.$$
  Then, for all exponents $p,q\ge1$ satisfying the relations 
  $$\frac2p+\frac dq=1,\qquad q>d+1,$$
  there exists $C>0$ such that for all $W_1,W_2\in L^p_tL^q_x(\R\times\R^d)$ we have 
  \begin{equation}\label{eq:schatten-strichartz-mixed}
    \norm{W_1T_SW_2}_{\gS^q(L^2(\R^{d+1}))}\le C\norm{W_1}_{L^p_tL^q_x(\R\times\R^d)}\norm{W_2}_{L^p_tL^q_x(\R\times\R^d)}.
  \end{equation}
\end{theorem}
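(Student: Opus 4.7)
I would follow Stein's complex interpolation approach, as in the proof of Theorem~\ref{thm:main} above, adapted to the mixed-norm setting. The first step is to obtain an explicit kernel representation for $T_S$: from the identity $(\cR_S)^*g(t,x)=(e^{it\Delta_x}\hat g)(x)$ and the Schr\"odinger propagator kernel, one finds that $T_S=(\cR_S)^*\cR_S$ is the space-time convolution operator on $L^2(\R^{d+1})$ with kernel
$$K(t,x)=c_d\,|t|^{-d/2}e^{i|x|^2/(4t)}.$$

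Next, I would embed both $T_S$ and the weights into a holomorphic family. Using polar decompositions $W_j=|W_j|U_j$, set $W_j^{(z)}:=|W_j|^z U_j$, and let $\{T_z\}$ be a holomorphic family of space-time convolution operators whose kernels are regularizations $c_z|t|^{z-1-d/2}e^{i|x|^2/(4t)}$ of $K$, suitably normalized by $\Gamma$-factors so that $T_S$ is recovered (up to a harmless constant) at some distinguished value $z=z_\star$. Two endpoint bounds, each uniform in $\im z$ up to a Gaussian factor, are then needed. On the line $\re z=0$, the multiplier associated to $T_z$ is bounded on $L^2(\R^{d+1})$ by Plancherel, and combining this with $\|U_j\|_\infty=1$ yields the operator-norm bound
$$\|W_1^{(i\tau)}T_{i\tau}W_2^{(i\tau)}\|_{\gS^\infty}\leq Ce^{a\tau^2}\|W_1\|_{L^\infty}\|W_2\|_{L^\infty}.$$
On a vertical line $\re z=z_1$, the Hilbert--Schmidt norm of $W_1^{(z)}T_z W_2^{(z)}$ can be computed directly: since the oscillatory phase has modulus one, the $(x,y)$-integral factors completely, leaving the one-dimensional expression
$$\iint |t-s|^{2z_1-d-2}\|W_1(t,\cdot)\|_{L^{2z_1}}^{2z_1}\|W_2(s,\cdot)\|_{L^{2z_1}}^{2z_1}\,dt\,ds,$$
which is controlled by the one-dimensional Hardy--Littlewood--Sobolev inequality (admissible exactly when $2z_1-d-2\in(-1,0)$).

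Stein's complex interpolation theorem, applied to the analytic family $z\mapsto W_1^{(z)}T_z W_2^{(z)}$ between these two endpoints and evaluated at $z=z_\star$ (recovering $T_S$), will then produce the $\gS^q$-bound with $q=2z_\star$ and $p$ fixed by the scaling relation $2/p+d/q=1$. The strict condition $q>d+1$ in the statement matches the strict HLS admissibility $z_1>(d+1)/2$. The principal difficulty I anticipate is that $|K(t,x)|^2=c|t|^{-d}$ is both $x$-independent and too singular in $t$ for any direct Hilbert--Schmidt estimate on $T_S$ itself; the whole role of the analytic family is to split this singularity between the convolution kernel and the weights. A secondary technical point is to verify that the $\Gamma$-normalizations yield at most Gaussian growth in $|\im z|$ at each endpoint, so that Stein's interpolation theorem can be applied.
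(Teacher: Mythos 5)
Your approach is essentially the same as the paper's: Stein complex interpolation in the Schatten setting, with an analytic family $G_z$ obtained by regularizing the singular multiplier, a Hilbert--Schmidt endpoint derived from the pointwise size of the regularized kernel, and an operator-norm endpoint from Plancherel. You also correctly identify the crucial extra structure in the paraboloid case, namely that the modulus $|K(t-s,x-y)|$ is $x$-independent, which is exactly what lets the Hilbert--Schmidt computation factor into an $x$-integral of $|W_j|^{2z_1}$ times a one-dimensional HLS kernel in $t$; this is the mechanism that produces the \emph{mixed} $L^p_tL^q_x$ norm rather than a pure $L^r$ norm, and it is the genuinely new ingredient of Theorem~\ref{thm:schatten-strichartz-mixed} compared to Theorem~\ref{thm:main}.

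Two points in your write-up need correcting, however. First, the final numerology is garbled: with your conventions, both $T_z=T_S$ and $W_j^{(z)}=W_j$ are recovered at $z_\star=1$, so the Schatten exponent is \emph{not} $2z_\star=2$. It is determined by the position $z_1$ of the Hilbert--Schmidt line: interpolating $\gS^\infty$ at $\re z=0$ and $\gS^2$ at $\re z=z_1$ and evaluating at $z=1$ gives $\gS^{2z_1}$, so you should set $z_1=q/2$ (not $z_\star=q/2$), and the HLS window $z_1\in((d+1)/2,(d+2)/2)$ then corresponds to $q\in(d+1,d+2)$. Second, this observation reveals that the direct argument you describe only yields the range $d+1<q<d+2$: for $q\ge d+2$ the exponent $2z_1-d-2$ is nonnegative, the HLS kernel no longer decays, and the Hilbert--Schmidt endpoint computation breaks down. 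To cover the full claimed range $q>d+1$ you need an additional step for $q\ge d+2$, either by invoking the earlier result of Frank--Lewin--Lieb--Seiringer (\cite{FraLewLieSei-13}) there, or by (bilinear) interpolation of the proven $\gS^q$ bound near $q=d+2$ with the trivial $\gS^\infty$ bound at $(p,q)=(2,\infty)$. Also, a small technical remark: the $\Gamma$-normalizations typically give at worst exponential (not Gaussian) growth in $|\im z|$, which is still within the scope of Stein's theorem, but the claim of Gaussian growth as stated is not quite what one actually gets.
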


Contrary to the compact case of Theorem \ref{thm:main}, we see here that the Schatten class $\gS^1$ is never attained, which is due to the fact that $S$ has infinite volume in this case and hence the argument presented to obtain the trace-class property does not hold anymore. Theorem \ref{thm:schatten-strichartz-mixed} was proved for the first time by Frank, Lewin, Lieb, and Seiringer in \cite{FraLewLieSei-13} in the restricted range $q\ge d+2$. They also proved the following statement concerning the optimality of the Schatten exponent.

\begin{proposition}\label{prop:optimality}
  Let $d,p,q\ge1$ exponents satisfying $2/p+d/q=1$. Assume that there exists $\alpha\ge1$ and $C>0$ such that for all Schwartz function $W$ one has
  \begin{equation}\label{eq:schatten-strichartz-mixed-2}
    \norm{\bar{W}T_SW}_{\gS^\alpha(L^2(\R^{d+1}))}\le C\norm{W}_{L^p_tL^q_x(\R\times\R^d)}^2.
  \end{equation}
  Then, the Schatten exponent $\alpha$ must satisfy
  $$\alpha\ge q,\qquad \alpha>d+1.$$
\end{proposition}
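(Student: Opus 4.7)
The plan is to recast \eqref{eq:schatten-strichartz-mixed-2} in dual form and then test the inequality against two different kinds of objects to read off the two claimed lower bounds. Writing $T_S=(\cR_S)^*\cR_S$ and using Schatten duality, \eqref{eq:schatten-strichartz-mixed-2} is equivalent to the orthonormal Strichartz estimate
\[
\left\|\sum_j\nu_j\,|e^{it\Delta}u_j|^2\right\|_{L^{q/d}_tL^{q/(q-2)}_x(\R\times\R^d)}\le C\Big(\sum_j|\nu_j|^{\alpha'}\Big)^{1/\alpha'}
\]
for every orthonormal family $(u_j)\subset L^2(\R^d)$; here the exponents on the left follow from $p/(p-2)=q/d$, a consequence of $2/p+d/q=1$. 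Lower bounds on $\alpha$ now amount to exhibiting families that saturate the estimate.

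For $\alpha\ge q$, I would use a momentum-lattice coherent state family. Fix $\phi\in C_c^\infty(B(0,1))$ with $\|\phi\|_{L^2}=1$ and, for $R\gg 1$, set $\widehat{u_j}(\xi)=\phi(\xi-\xi_j)$ for $(\xi_j)_{j=1}^M\subset 2\Z^d\cap B(0,R)$; this is orthonormal and $M\sim R^d$. Each wavepacket evolves as $(e^{it\Delta}u_j)(x)=e^{i(x\cdot\xi_j-t|\xi_j|^2)}g(x-2t\xi_j,t)$ with $g(y,t)=\int e^{i(y\cdot\eta-t|\eta|^2)}\phi(\eta)\,d\eta$, so $\rho_M(t,x)=\sum_j|g(x-2t\xi_j,t)|^2$. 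Splitting into short times $|t|\lesssim 1/R$—where the packets all overlap in a fixed ball and $\rho_M\sim M$—and long times $|t|\gtrsim 1/R$—where the packets have separated and each has spread to amplitude $\sim t^{-d}$ on a ball of radius $t$—direct integration yields $\|\rho_M\|_{L^{q/d}_tL^{q/(q-2)}_x}\gtrsim M^{(q-1)/q}$. Taking $\nu_j=1$ and letting $M\to\infty$ forces $(q-1)/q\le 1/\alpha'$, i.e.\ $\alpha\ge q$.

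For $\alpha>d+1$, the preceding family only yields a strict inequality when $q>d+1$, so I treat the remaining regime by working directly with a single smooth compactly supported $W$ and computing a Weyl law for $\bar W T_S W$. The partner operator $AA^*=\cR_S W^2(\cR_S)^*$ on $L^2(\R^d)$ has kernel $K_W(\xi,\eta)=\widehat{W^2}(\xi-\eta,|\eta|^2-|\xi|^2)$, and since $\widehat{W^2}\in\mathcal{S}$ this kernel is concentrated where $|\xi-\eta|\lesssim 1$ and $||\eta|^2-|\xi|^2|\lesssim 1$. In a dyadic shell $|\xi|\sim R$ this forces $\eta-\xi$ to lie in a slab of radial thickness $\sim 1/R$ and tangential thickness $\sim 1$; a standard prolate-spheroidal/finite-band convolution count then produces $\sim R^{d+1}$ eigenvalues of size $\sim 1/R$ in this shell, and summing dyadically in $R$ gives the Weyl asymptotic $\#\{\lambda_n(AA^*)>\lambda\}\sim\lambda^{-(d+1)}$ as $\lambda\to 0^+$. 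Hence $\|AA^*\|_{\gS^\alpha}^\alpha=\alpha\int_0^\infty\lambda^{\alpha-1}N(\lambda)\,d\lambda$ is finite precisely when $\alpha>d+1$, which forces the stated strict inequality.

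The main obstacle is the Weyl law in the last step: making the radial/tangential factorization rigorous and summable over the dyadic shells uniformly in $R$ requires some careful stationary-phase work, though the qualitative picture is standard for such non-degenerate phases.
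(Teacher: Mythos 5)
This is one of the places in the paper where a result is \emph{cited} rather than proved: Proposition~\ref{prop:optimality} is attributed to Frank--Lewin--Lieb--Seiringer \cite{FraLewLieSei-13}, and the review paper supplies no proof of it (it only proves the \emph{new} necessary condition, Proposition~\ref{prop:new-necessary-condition}, by a completely different translation argument). So I will assess your proposal on its own merits and against what such a proof would have to contain, rather than against a proof in this text.

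Your two-part structure is the right one, and the two ideas are essentially independent. For $\alpha\ge q$, the momentum-lattice wavepacket family is the standard semiclassical test and your bookkeeping is correct: the $\xi_j\in 2\Z^d\cap B(0,R)$ with $\phi\in C_c^\infty(B(0,1))$ give exactly orthonormal $u_j$, and the transition time $|t|\sim 1/R$ is the correct one (the packets have width $\sim 1$ and centers spread over $\sim R|t|$, so for $|t|\lesssim 1/R$ they all pile up with $\rho_M\sim M$ on a fixed ball). In fact you only need the short-time regime: $\int_{|t|\lesssim 1/R}M^{q/d}\,dt\sim R^{-1}M^{q/d}$ already yields $\|\rho_M\|_{L^{q/d}_tL^{q/(q-2)}_x}\gtrsim M^{(q-1)/q}$ after taking the $d/q$ power, which together with $\nu_j\equiv1$ forces $1-1/q\le 1/\alpha'$, i.e.\ $\alpha\ge q$. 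This is certainly in the spirit of the argument in \cite{FraLewLieSei-13}.

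The second part is where I would push back. The Weyl-law heuristic you describe does give the right order of magnitude: the kernel $K(\xi,\eta)=\widehat{|W|^2}(\xi-\eta,|\eta|^2-|\xi|^2)$ is concentrated in a slab of radial thickness $1/R$ and tangential thickness $1$ when $|\xi|\sim R$, and a ``$\Tr K$ versus $\Tr K^2$'' computation on a dyadic shell gives $\Tr K|_{\text{shell}}\sim R^d$ and $\Tr K^2|_{\text{shell}}\sim R^{d-1}$, consistent with $\sim R^{d+1}$ eigenvalues of size $\sim 1/R$, hence $N(\lambda)\gtrsim\lambda^{-(d+1)}$ and $\|K\|_{\gS^{d+1}}=\infty$. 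But the step you wave at (``a standard prolate-spheroidal/finite-band convolution count'') is the entire proof: the operator is not translation-invariant, the dyadic shells are coupled by the tangential width of the kernel, and upgrading the trace heuristic to a genuine lower bound on the counting function requires either a two-term trace asymptotic, a careful spectral-shift comparison with model band-limited operators, or (more in the spirit of the rest of this subject) testing the \emph{dual} orthonormal bound against a suitable family. Note also two small slips: you write $W^2$ where it should be $|W|^2=W\bar W$ (the operator is $\bar WT_SW$), and the partner operator is $\cR_S|W|^2(\cR_S)^*$, not $\cR_SW^2(\cR_S)^*$; this does not affect the argument since $|W|^2$ is still Schwartz. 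Finally, only a \emph{lower} bound $N(\lambda)\gtrsim\lambda^{-(d+1)}$ is needed for the strict inequality $\alpha>d+1$; you do not need the matching upper bound, which somewhat reduces the burden, but it does not make the step trivial. As it stands the second half is a plausible plan rather than a proof, and you are right to flag it as the main obstacle.
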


This last statement implies that the range $q>d+1$ is optimal for \eqref{eq:schatten-strichartz-mixed-2} to hold with $\alpha=q$, as in Theorem \ref{thm:schatten-strichartz-mixed}. However, one can go below $q=d+1$ if the Schatten exponent is not $\alpha=q$. Indeed, in the numerology of Theorem \ref{thm:schatten-strichartz-mixed}, the Keel-Tao endpoint \cite{KeeTao-98} corresponds to $q=d$ (for $d\ge3$) and $q=2$ (if $d=1$), while for $d=2$ the endpoint Strichartz estimate is known to fail. As a consequence, one cannot hope to have \eqref{eq:schatten-strichartz-mixed-2} for $q<d$ (while the relation $2/p+d/q=1$ is imposed by scaling). The result of Keel-Tao implies that we have the estimate \eqref{eq:schatten-strichartz-mixed-2} for $q=d$ ($d\ge3)$ or $q=2$ ($d=1$) with the Schatten exponent $\alpha=\ii$ (which corresponds to the operator norm). Interpolating between the cases $q=d$ ($\alpha=\ii$) and $q>d+1$ ($\alpha=q$), we obtain the estimate \eqref{eq:schatten-strichartz-mixed-2} in the range $d\le q\le d+1$ with $\alpha=q/(q-d)+\epsilon$ for all $\epsilon>0$. The results are depicted in Figure~\ref{figure:strichartz-1}, where the purple region denotes the set of exponents for which we know that \eqref{eq:schatten-strichartz-mixed} holds. Away from the union of the purple and dashed regions, the estimate \eqref{eq:schatten-strichartz-mixed-2} fails due to Proposition \ref{prop:optimality}. Hence, the question remains whether \eqref{eq:schatten-strichartz-mixed-2} holds in the dashed region. We discuss these (new) results in the next section.

\begin{center}
\begin{figure}
 \scalebox{.75}{
 \ifx\JPicScale\undefined\def\JPicScale{1}\fi
  \psset{unit=\JPicScale mm}
  \newrgbcolor{userFillColour}{0 0 0}
  \psset{linewidth=0.3,dotsep=1,hatchwidth=0.3,hatchsep=1.5,shadowsize=1,dimen=middle}
  \psset{dotsize=0.7 2.5,dotscale=1 1,fillcolor=userFillColour}
  \psset{arrowsize=1 2,arrowlength=1,arrowinset=0.25,tbarsize=0.7 5,bracketlength=0.15,rbracketlength=0.15}
  \makeatletter
  \makeatother
    \begin{pspicture}(4,3)(152,67)
    \psline[linewidth=0.5](110,9)(110,7)
    \pscustom[linestyle=none,fillstyle=vlines]{\psline(140,9)(140,60)
    \psline(140,60)(110,60)
    \psline(110,60)(140,9)
    \psbezier(140,9)(140,9)(140,9)
    }
    \psline[linewidth=0.45]{<-}(10,67)(10,9)
    \newrgbcolor{userFillColour}{0.6 0.6 1}
    \pscustom[linestyle=none,fillcolor=userFillColour,fillstyle=solid]{\psline(10,9)(110,60)
    \psline(110,60)(140,9)
    \psline(140,9)(10,9)
    \psbezier(10,9)(10,9)(10,9)
    }
    \newrgbcolor{userLineColour}{0.6 0.6 1}
    \psline[linewidth=0.5,linecolor=userLineColour](10,9)(110,60)
    \psline[linewidth=0.5](140,9)(140,7)
    \psline[linewidth=0.5](8,60)(10,60)
    \rput(4,60){$\frac{1}{d+1}$}
    \rput(110,3){$\frac{1}{d+1}$}
    \rput(140,3){$\frac{1}{d}$}
    \rput(152,9){$\frac{1}{q}$}
    \rput(4,67){$\frac{1}{\alpha}$}
    \newrgbcolor{userLineColour}{0.6 0.6 1}
    \psline[linecolor=userLineColour](10,9)(110,60)
    \newrgbcolor{userFillColour}{1 1 1}
    \rput{0}(110,60){\psellipse[fillcolor=userFillColour,fillstyle=solid](0,0)(1,-1)}
    \psline{->}(56,46)(62,37)
    \rput(54,50){$\frac{1}{\alpha}=\frac{1}{q}$}
    \psline[linewidth=0.45]{->}(10,9)(147,9)
    \newrgbcolor{userLineColour}{0.6 0.6 1}
    \psline[linecolor=userLineColour](10,9)(140,9)
    \newrgbcolor{userFillColour}{0.6 0.6 1}
    \rput{0}(140,9){\psellipse[fillcolor=userFillColour,fillstyle=solid](0,0)(1,-1)}
    \rput(8,7){$0$}
\end{pspicture}
 }
 \caption{Range of Strichartz inequality ($d\ge3$). Purple: valid. Dashed region: unknown by previous works.\label{figure:strichartz-1}} 

\end{figure}
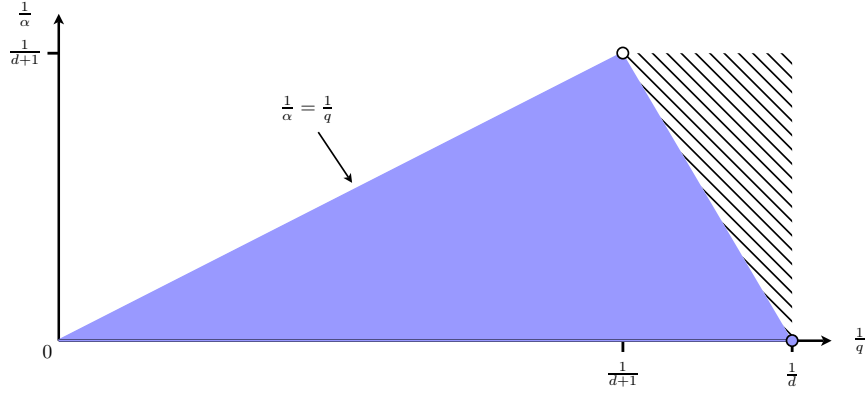
\end{center}

\subsection{New results}

In this section, we prove the following two new results:

\begin{lemma}\label{lem:optimality-keel-tao}
 Let $d\ge3$ and $S$ the paraboloid in $\R^{d+1}$. Then, for any non-zero $W\in L^d_x(\R^d)$, the bounded operator $\bar{W}T_SW$ is not compact in $L^2(\R^{d+1})$. 
\end{lemma}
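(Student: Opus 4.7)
The plan is to use a symmetry obstruction to compactness: since $W$ depends only on the spatial variable $x$, the operator $\bar W T_S W$ commutes with translations in the time variable $t$. I would exploit this by constructing one function $\phi\in L^2(\R^{d+1})$ with $\bar W T_S W\phi\ne 0$ and then time-translating it to produce a sequence $\phi_n$ that is weakly null in $L^2$ while $\norm{\bar W T_S W\phi_n}_{L^2}$ remains equal to a positive constant. This contradicts compactness, since any compact operator sends weakly null sequences to norm null sequences.

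The nontrivial step is to verify that $\bar W T_S W\ne 0$. I would take the test function $\phi(t,x)=g(t)h(x)$, where $g$ is a centred Gaussian on $\R$ (so that $\hat g(\tau)>0$ for every $\tau\in\R$) and $h=\1_A$ is the indicator of a bounded measurable set $A\subset\{W\ne 0\}$ with $0<|A|<\infty$. Since $W$ is independent of $t$,
\begin{equation*}
  \widehat{W\phi}(\tau,\xi)=\hat g(\tau)\,\widehat{Wh}(\xi),
\end{equation*}
and Hölder's inequality (using $|A|<\infty$, $W\in L^d$ and $d\ge 3$) gives $Wh\in L^1\cap L^2(\R^d)$, so $\widehat{Wh}$ is a continuous function in $L^2(\R^d)$ which is not identically zero. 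Parameterizing $S$ by $\xi\mapsto(-|\xi|^2,\xi)$ and using $L^2(S)\simeq L^2(\R^d)$ I then obtain
\begin{equation*}
  \langle \bar W T_S W\phi,\phi\rangle = \norm{\cR_S(W\phi)}_{L^2(S)}^2 = \int_{\R^d}\left|\hat g(-|\xi|^2)\right|^2\left|\widehat{Wh}(\xi)\right|^2 d\xi,
\end{equation*}
which is finite by the rapid decay of $\hat g(-|\xi|^2)$ and strictly positive because the integrand is continuous and not identically zero; in particular $\bar W T_S W\phi\ne 0$.

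For the final step, let $\tau_\sigma$ denote the unitary time-translation $(\tau_\sigma f)(t,x):=f(t-\sigma,x)$ on $L^2(\R^{d+1})$. The multiplications $W$ and $\bar W$ depend only on $x$ and $T_S$ is a Fourier multiplier in $(t,x)$, so all three commute with every $\tau_\sigma$, and hence so does $\bar W T_S W$. For any sequence $t_n\to+\infty$, set $\phi_n:=\tau_{t_n}\phi$; then $\phi_n\wto 0$ weakly in $L^2$ (a routine density argument using $\langle\phi_n,\psi\rangle=\langle\phi,\tau_{-t_n}\psi\rangle\to 0$ for $\psi\in C_c^\infty$), while
\begin{equation*}
  \norm{\bar W T_S W\phi_n}_{L^2} = \norm{\tau_{t_n}(\bar W T_S W\phi)}_{L^2} = \norm{\bar W T_S W\phi}_{L^2} > 0
\end{equation*}
is independent of $n$, giving the desired contradiction. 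The only place requiring any genuine work is the non-vanishing $\bar W T_S W\ne 0$; the rest is entirely driven by the translation symmetry in time.
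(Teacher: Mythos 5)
Your proof is correct and is essentially the paper's own argument: both exploit the time-translation invariance of $\bar W T_S W$ (coming from $W$ being independent of $t$) to transport a fixed test function into a weakly null sequence along which the operator does not decay, which is the standard obstruction to compactness. The paper phrases this on $L^2_x(\R^d)$ via the conjugated operator $\int_\R e^{-it\Delta_x}|W|^2e^{it\Delta_x}\,dt$ and the sequence $\phi_n=e^{in\Delta_x}\phi$, which is the unitary image of your time-translates; the only small difference is that you spell out the non-vanishing step $\bar W T_S W\phi\ne 0$ with an explicit tensor-product test function, whereas the paper leaves the choice of $\phi$ implicit.
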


\begin{proposition}\label{prop:new-necessary-condition}
 If the inequality \eqref{eq:schatten-strichartz-mixed-2} holds, then $\alpha\ge q/(q-d)$. 
\end{proposition}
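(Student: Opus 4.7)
The strategy is to test \eqref{eq:schatten-strichartz-mixed-2} against a family of widely-separated time translates of a fixed profile and to extract the bound on $\alpha$ from the way both sides scale with the number of copies. Fix a nonzero $W_1\in C^\ii_0(\R^{d+1})$ with temporal support contained in $[0,1]$, and for $N\in\N$ and $T>1$ set $W_j(t,x):=W_1(t-jT,x)$ and $W^{(N,T)}:=\sum_{j=1}^N W_j$. Because the temporal supports are pairwise disjoint,
$$\norm{W^{(N,T)}}_{L^p_t L^q_x}^p = \sum_{j=1}^N \norm{W_j}_{L^p_t L^q_x}^p = N\,\norm{W_1}_{L^p_t L^q_x}^p,$$
so the right-hand side of \eqref{eq:schatten-strichartz-mixed-2} applied to $W^{(N,T)}$ grows like $N^{2/p}$ and is independent of $T$.

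For the left-hand side I would decompose $\bar{W^{(N,T)}}\,T_S\,W^{(N,T)} = D_T+O_T$ with $D_T:=\sum_j \bar{W_j}T_S W_j$ and $O_T:=\sum_{j\neq k}\bar{W_j}T_S W_k$. Since $T_S$ commutes with time translation, each $\bar{W_j}T_S W_j$ is unitarily equivalent to $\bar{W_1}T_S W_1$, and these operators act on the pairwise orthogonal subspaces $L^2([jT,jT+1]\times\R^d)$. Consequently $\norm{D_T}_{\gS^\alpha}^\alpha = N\,\norm{\bar{W_1}T_S W_1}_{\gS^\alpha}^\alpha$, independent of $T$. The technical core is then to show $\norm{O_T}_{\gS^\alpha}\to 0$ as $T\to\ii$ with $N$ fixed. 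Each cross operator $\bar{W_j}T_S W_k$ with $j\neq k$ has a smooth, compactly supported integral kernel $\bar{W_j}(t,x)\,K_{\mathrm{Sch}}(t-s,x-y)\,W_k(s,y)$, on whose support $|t-s|\ge(|j-k|-1)T$; combining the dispersive bound $|K_{\mathrm{Sch}}(\tau,\cdot)|\le C|\tau|^{-d/2}$ with standard trace-ideal estimates for integral operators with smooth, compactly supported kernels, one obtains
$$\norm{\bar{W_j}T_S W_k}_{\gS^1} \le C_{W_1}\bigl((|j-k|-1)T\bigr)^{-d/2}.$$
Summing the $N(N-1)$ cross terms and using $\norm{\cdot}_{\gS^\alpha}\le\norm{\cdot}_{\gS^1}$ for $\alpha\ge 1$ yields $\norm{O_T}_{\gS^\alpha}\le C_{N,W_1}T^{-d/2}\to 0$.

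Combining the previous steps with the triangle inequality in $\gS^\alpha$ and passing to $T\to\ii$ in the hypothesized inequality gives, for every $N\ge 1$,
$$N^{1/\alpha}\,\norm{\bar{W_1}T_S W_1}_{\gS^\alpha} \le C\,N^{2/p}\,\norm{W_1}_{L^p_t L^q_x}^2.$$
Choosing $W_1$ so that $\bar{W_1}T_S W_1\not\equiv 0$ (any Schwartz $W_1$ whose Fourier transform does not vanish identically on $S$ will do, e.g.\ a positive Gaussian-like bump) and letting $N\to\ii$ forces $1/\alpha\le 2/p$, i.e.\ $\alpha\ge p/2$. The scaling relation $2/p+d/q=1$ converts this to $\alpha\ge q/(q-d)$, as claimed.

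The principal obstacle is the $\gS^1$-bound on the cross terms: a direct Hilbert--Schmidt estimate (which would bound $\sum s_n^2$) suffices only for $\alpha\ge 2$, and for $\alpha<2$ one must exploit smoothness of the kernel to place the cross operator into the trace class. This is, however, routine since the kernel is $C^\ii$ with compact support (on the support, $t-s$ is bounded below so $K_{\mathrm{Sch}}$ is smooth there), and all its derivatives inherit the $T^{-d/2}$ decay.
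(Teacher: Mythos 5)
Your proof is correct and reaches the stated conclusion, but it takes a genuinely more direct technical route than the paper's. Both arguments test \eqref{eq:schatten-strichartz-mixed-2} against the same trial function (a sum of $N$ widely-separated temporal translates of a fixed bump), and both isolate a ``diagonal'' contribution of size $N^{1/\alpha}$ against a right-hand side of size $N^{2/p}$. The difference is in how the off-diagonal contributions are killed and in what range of $\alpha$ the argument is carried out. The paper first proves only the special case $\alpha=d+2$: it writes $V=|W|^2=\sum_j v(t-jT,x)$, expands $\Tr\bigl(\sum_j A_j\bigr)^{d+2}$ into diagonal and off-diagonal tuples (using that $d+2$ is an integer and that $A_v\in\gS^{d+2}$ by the known Strichartz estimate), controls the off-diagonal traces via cyclicity and a \emph{soft} lemma (namely $A e^{it\Delta}B\to0$ in $\gS^{(d+2)/2}$ for $A,B\in\gS^{d+2}$), obtains $q\ge d(d+2)/(d+1)$, and finally deduces the full statement by a complex-interpolation contradiction against the red half-line in Figure~\ref{fig:argument-1}. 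You instead work directly in $\gS^\alpha$ for arbitrary $\alpha\ge1$: orthogonality of the diagonal blocks gives $\|D_T\|_{\gS^\alpha}^\alpha = N\|\bar{W_1}T_SW_1\|_{\gS^\alpha}^\alpha$ exactly, and you control $O_T$ by a \emph{hard} trace-norm estimate on each cross block, using the explicit dispersive kernel decay $|K_{\mathrm{Sch}}(\tau,\cdot)|\lesssim|\tau|^{-d/2}$ together with smoothness and compact support to place $\bar W_j T_S W_k$ in $\gS^1$ with norm $O(T^{-d/2})$. This avoids both the integer restriction on $\alpha$ and the interpolation step, so you read off $1/\alpha\le 2/p$, i.e. $\alpha\ge p/2=q/(q-d)$, in one pass. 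The trade-off is that your off-diagonal estimate hard-codes the paraboloid (the Schr\"odinger kernel), whereas the paper's soft lemma only uses $A_v\in\gS^{d+2}$ and the weak decay of $e^{it\Delta}$ and would adapt with no change to the other quadratic surfaces treated in \cite{FraSab-14}. Two small points to fix in the write-up: the lower bound on $|t-s|$ on the support of the cross kernel should be $|j-k|T-1$ (or just $T-1$), not $(|j-k|-1)T$, which vanishes for adjacent blocks; and the naming collision between the fixed bump $W_1$ and the translate $W_j$ at $j=1$ should be resolved (e.g.\ index $j=0,\ldots,N-1$ or call the bump $w$). Neither affects the substance of the argument.
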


\begin{remark}
 Lemma \ref{lem:optimality-keel-tao} means that $\alpha=\ii$ is optimal in the estimate \eqref{eq:schatten-strichartz-mixed-2} at the Keel-Tao endpoint $q=d$. Indeed, notice that when $q=d$, the exponent $p$ in the statement of Theorem \ref{thm:schatten-strichartz-mixed} is equal to $p=\ii$. Hence, in Lemma \ref{lem:optimality-keel-tao} the function $W=W(x)\in L^d_x(\R^d)$ is identified to the (constant in time) function $W=W(t,x)=W(x)\in L^\ii_tL^d_x(\R\times\R^d)$. In other words, this means that one cannot do better than the triangle inequality at the Keel-Tao endpoint. 
\end{remark}

\begin{remark}
 Proposition \ref{prop:new-necessary-condition} means that the inequality \eqref{eq:schatten-strichartz-mixed-2} actually \emph{fails} in the dashed region of Figure \ref{figure:strichartz-1}.
\end{remark}

According to Proposition \ref{prop:new-necessary-condition}, the only place where the validity of \eqref{eq:schatten-strichartz-mixed-2} is not known is the dashed line represented on Figure \ref{fig:main}. It would follow from proving an endpoint estimate at $q=d+1$. At this point, it is conjectured that \eqref{eq:schatten-strichartz-mixed-2} holds in the weak Schatten space $\gS^{d+1}_{\text{w}}$ rather than in $\gS^{d+1}$ (for which the estimate does not hold). However, this weak Schatten estimate is completely open.

\begin{center}
\begin{figure}
 \scalebox{.75}{
 
 \ifx\JPicScale\undefined\def\JPicScale{1}\fi
\psset{unit=\JPicScale mm}
\newrgbcolor{userFillColour}{0 0 0}
\psset{linewidth=0.3,dotsep=1,hatchwidth=0.3,hatchsep=1.5,shadowsize=1,dimen=middle}
\psset{dotsize=0.7 2.5,dotscale=1 1,fillcolor=userFillColour}
\psset{arrowsize=1 2,arrowlength=1,arrowinset=0.25,tbarsize=0.7 5,bracketlength=0.15,rbracketlength=0.15}
\makeatletter
\makeatother
\begin{pspicture}(4,3)(152,67)
\psline[linewidth=0.45]{<-}(10,67)(10,9)
\newrgbcolor{userFillColour}{0.6 0.6 1}
\pscustom[linestyle=none,fillcolor=userFillColour,fillstyle=solid]{\psline(10,9)(110,60)
\psline(110,60)(140,9)
\psline(140,9)(10,9)
\psbezier(10,9)(10,9)(10,9)
}
\psline[linewidth=0.5,linestyle=dotted](110,60)(140,9)
\newrgbcolor{userLineColour}{0.6 0.6 1}
\psline[linewidth=0.5,linecolor=userLineColour](10,9)(110,60)
\psline[linewidth=0.5](110,11)(110,7)
\psline[linewidth=0.5](140,11)(140,7)
\psline[linewidth=0.5](8,60)(13,60)
\rput(4,60){$\frac{1}{d+1}$}
\rput(110,3){$\frac{1}{d+1}$}
\rput(140,3){$\frac{1}{d}$}
\rput(152,9){$\frac{1}{q}$}
\rput(4,67){$\frac{1}{\alpha}$}
\psline(10,9)(110,60)
\newrgbcolor{userFillColour}{1 1 1}
\rput{0}(110,60){\psellipse[fillcolor=userFillColour,fillstyle=solid](0,0)(1,-1)}
\psline{->}(56,46)(62,37)
\rput(54,50){$\frac{1}{\alpha}=\frac{1}{q}$}
\psline{<-}(125,38)(135,44)
\rput(138,48){$\frac{1}{\alpha}=1-\frac{d}{q}$}
\psline[linewidth=0.45]{->}(10,9)(147,9)
\end{pspicture}

 }
 \caption{Range of optimal Strichartz inequality (purple). Dashed line: conjectured to hold.\label{fig:main}} 
\end{figure}
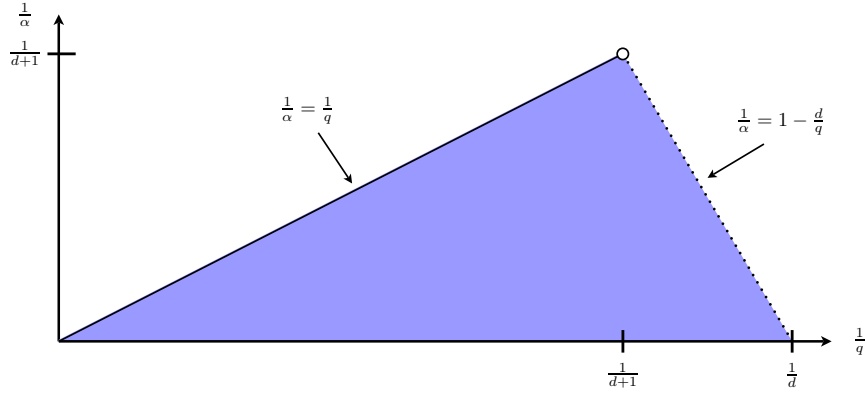
\end{center}

We now turn to the proofs of Lemma \ref{lem:optimality-keel-tao} and Proposition \ref{prop:new-necessary-condition}.

\begin{proof}[Proof of Lemma \ref{lem:optimality-keel-tao}]
 Recall that $\bar{W}T_SW=\bar{W}(\cR_S)^*\cR_SW=(\cR_S W)^*\cR_S W$, so that the compactness of $\bar{W}T_SW$ is equivalent to the compactness of 
 $$\cF_x\cR_S W(\cR_S W)^*\cF_x=\cF_x\cR_S|W|^2(\cR_S)^*\cF_x=\int_\R e^{-it\Delta_x}|W(t,x)|^2 e^{it\Delta_x}\,dt$$
 on $L^2_x(\R^d)$, where $\cF_x$ denotes the Fourier transform on $L^2_x(\R^d)$. Using that $W(t,x)=W(x)$ is independent of $t$, we find that for $\phi_n:=e^{in\Delta_x}\phi$, we have by a simple change of variables
 \begin{align*}
    \langle\phi_n,\int_\R e^{-it\Delta_x}|W(t,x)|^2 e^{it\Delta_x}\,dt\,\phi_n\rangle &= \langle\phi,\int_\R e^{-i(t+n)\Delta_x}|W(x)|^2 e^{i(t+n)\Delta_x}\,dt\,\phi\rangle \\
    &= \langle\phi,\int_\R e^{-it\Delta_x}|W(x)|^2 e^{it\Delta_x}\,dt\,\phi\rangle \\
    &= \int_\R\norm{We^{it\Delta_x}\phi}_{L^2_x}^2\,dt,
 \end{align*}
 which is independent of $n$ and certainly non-zero since $W\neq0$, for an adequate choice of $\phi\in L^2_x(\R^d)$. Since $(\phi_n)$ goes weakly to zero in $L^2_x(\R^d)$ this shows that the operator 
 $$\int_\R e^{-it\Delta_x}|W(t,x)|^2 e^{it\Delta_x}\,dt$$
 is not compact on $L^2_x(\R^d)$.
\end{proof}

When $d=1$, we have $q=2$ and hence $p=4$. Hence, the previous argument (which relied on the fact that $p=\ii$) fails in this case and in principle, we may expect that one can do better than $\alpha=\ii$ at the endpoint $q=2$ when $d=1$.

\begin{proof}[Proof of Proposition \ref{prop:new-necessary-condition}]
 We claim that if \eqref{eq:schatten-strichartz-mixed-2} holds for $\alpha=d+2$, then $q\ge d(d+2)/(d+1)$. Graphically, this means that we show that the inequality fails on an open half-line as depicted on Figure \ref{fig:argument-1}.

\begin{center}
 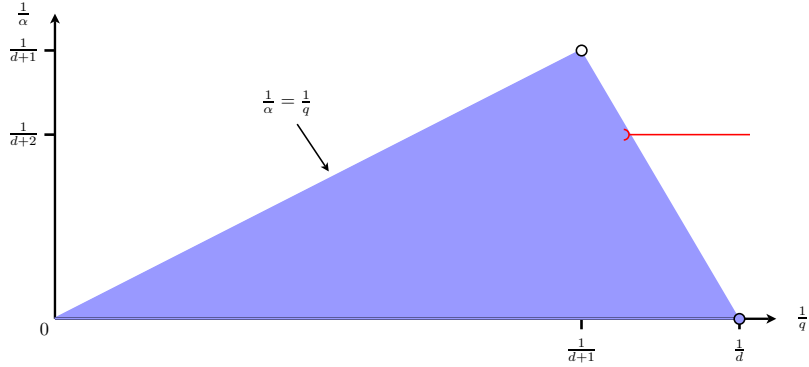
\begin{figure}
 \scalebox{.7}{
 
 \ifx\JPicScale\undefined\def\JPicScale{1}\fi
\psset{unit=\JPicScale mm}
\newrgbcolor{userFillColour}{0 0 0}
\psset{linewidth=0.3,dotsep=1,hatchwidth=0.3,hatchsep=1.5,shadowsize=1,dimen=middle}
\psset{dotsize=0.7 2.5,dotscale=1 1,fillcolor=userFillColour}
\psset{arrowsize=1 2,arrowlength=1,arrowinset=0.25,tbarsize=0.7 5,bracketlength=0.15,rbracketlength=0.15}
\makeatletter
\makeatother
\begin{pspicture}(4,3)(152,67)
\psline[linewidth=0.5](110,9)(110,7)
\psline[linewidth=0.45]{<-}(10,67)(10,9)
\newrgbcolor{userFillColour}{0.6 0.6 1}
\pscustom[linestyle=none,fillcolor=userFillColour,fillstyle=solid]{\psline(10,9)(110,60)
\psline(110,60)(140,9)
\psline(140,9)(10,9)
\psbezier(10,9)(10,9)(10,9)
}
\newrgbcolor{userLineColour}{0.6 0.6 1}
\psline[linewidth=0.5,linecolor=userLineColour](10,9)(110,60)
\psline[linewidth=0.5](140,9)(140,7)
\psline[linewidth=0.5](8,60)(10,60)
\rput(4,60){$\frac{1}{d+1}$}
\rput(110,3){$\frac{1}{d+1}$}
\rput(140,3){$\frac{1}{d}$}
\rput(152,9){$\frac{1}{q}$}
\rput(4,67){$\frac{1}{\alpha}$}
\newrgbcolor{userLineColour}{0.6 0.6 1}
\psline[linecolor=userLineColour](10,9)(110,60)
\newrgbcolor{userFillColour}{1 1 1}
\rput{0}(110,60){\psellipse[fillcolor=userFillColour,fillstyle=solid](0,0)(1,-1)}
\psline{->}(56,46)(62,37)
\rput(54,50){$\frac{1}{\alpha}=\frac{1}{q}$}
\psline[linewidth=0.45]{->}(10,9)(147,9)
\newrgbcolor{userLineColour}{0.6 0.6 1}
\psline[linecolor=userLineColour](10,9)(140,9)
\newrgbcolor{userFillColour}{0.6 0.6 1}
\rput{0}(140,9){\psellipse[fillcolor=userFillColour,fillstyle=solid](0,0)(1,-1)}
\rput(8,7){$0$}
\psline[linewidth=0.5](8,44)(10,44)
\rput(4,44){$\frac{1}{d+2}$}
\rput{0}(118,44){\psellipticarc[linecolor=red](0,0)(1,-1){-90}{90}}
\psline[linecolor=red](142,44)(119,44)
\end{pspicture}

 }
 \caption{Open half-line (red) of invalidity of \eqref{eq:schatten-strichartz-mixed-2}.\label{fig:argument-1}} 
\end{figure}
\end{center}

 Once this is shown, this implies the proposition. Indeed, imagine that the inequality holds for a point on the previously dashed region. Then, by complex interpolation, it must hold on the convex hull of the purple region and this additional point. This convex hull clearly intersects the red open half-line, as depicted in the Figure \ref{fig:argument-2}, which leads to a contradiction. 
 
\begin{center}
  \begin{figure}
  \scalebox{.7}{
  
  \ifx\JPicScale\undefined\def\JPicScale{1}\fi
\psset{unit=\JPicScale mm}
\newrgbcolor{userFillColour}{0 0 0}
\psset{linewidth=0.3,dotsep=1,hatchwidth=0.3,hatchsep=1.5,shadowsize=1,dimen=middle}
\psset{dotsize=0.7 2.5,dotscale=1 1,fillcolor=userFillColour}
\psset{arrowsize=1 2,arrowlength=1,arrowinset=0.25,tbarsize=0.7 5,bracketlength=0.15,rbracketlength=0.15}
\makeatletter
\makeatother
\begin{pspicture}(4,3)(152,67)
\pscustom[linestyle=none,fillstyle=hlines]{\psline(136,38)(110,60)
\psline(110,60)(140,9)
\psline(140,9)(136,38)
\psbezier(136,38)(136,38)(136,38)
}
\psline[linewidth=0.5](110,9)(110,7)
\psline[linewidth=0.45]{<-}(10,67)(10,9)
\newrgbcolor{userFillColour}{0.6 0.6 1}
\pscustom[linestyle=none,fillcolor=userFillColour,fillstyle=solid]{\psline(10,9)(110,60)
\psline(110,60)(140,9)
\psline(140,9)(10,9)
\psbezier(10,9)(10,9)(10,9)
}
\newrgbcolor{userLineColour}{0.6 0.6 1}
\psline[linewidth=0.5,linecolor=userLineColour](10,9)(110,60)
\psline[linewidth=0.5](140,9)(140,7)
\psline[linewidth=0.5](8,60)(10,60)
\rput(4,60){$\frac{1}{d+1}$}
\rput(110,3){$\frac{1}{d+1}$}
\rput(140,3){$\frac{1}{d}$}
\rput(152,9){$\frac{1}{q}$}
\rput(4,67){$\frac{1}{\alpha}$}
\newrgbcolor{userLineColour}{0.6 0.6 1}
\psline[linecolor=userLineColour](10,9)(110,60)
\newrgbcolor{userFillColour}{1 1 1}
\rput{0}(110,60){\psellipse[fillcolor=userFillColour,fillstyle=solid](0,0)(1,-1)}
\psline{->}(56,46)(62,37)
\rput(54,50){$\frac{1}{\alpha}=\frac{1}{q}$}
\psline[linewidth=0.45]{->}(10,9)(147,9)
\newrgbcolor{userLineColour}{0.6 0.6 1}
\psline[linecolor=userLineColour](10,9)(140,9)
\newrgbcolor{userFillColour}{0.6 0.6 1}
\rput{0}(140,9){\psellipse[fillcolor=userFillColour,fillstyle=solid](0,0)(1,-1)}
\rput(8,7){$0$}
\psline[linewidth=0.5](8,44)(10,44)
\rput(4,44){$\frac{1}{d+2}$}
\psline(135,39)(137,37)
\psline(137,39)(135,37)
\rput{0}(118,44){\psellipticarc[linecolor=red](0,0)(1,-1){-90}{90}}
\psline[linecolor=red](142,44)(119,44)
\end{pspicture}

  }
  \caption{New region of validity if \eqref{eq:schatten-strichartz-mixed-2} holds at some other point.\label{fig:argument-2}} 
  \end{figure}
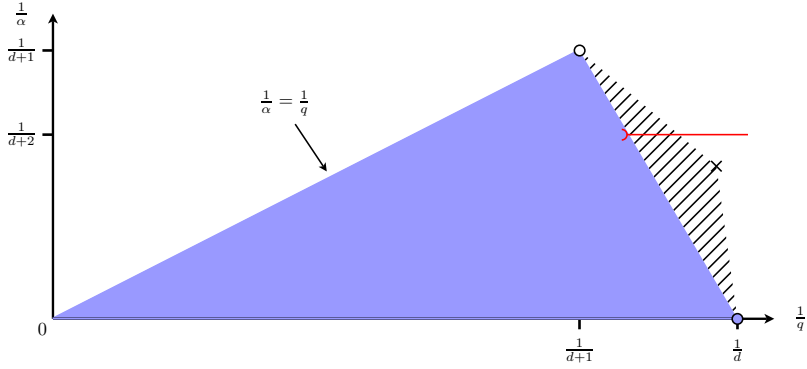
\end{center}
 
 We now prove the claim. Let $v=v(t,x)$ a bounded, non-negative, non-zero function on $\R\times\R^d$ such that 
 $$|t|\ge1/2\Longrightarrow v(t,\cdot)\equiv0,$$
 and let $N\in\N$, $T>1$. The parameter $T$ will be chosen later on to depend on $N$, and the parameter $N$ will go to infinity. Taking $W_1=\bar{W}$, $W_2=W$, $V=|W|^2$, and using a similar argument as in the proof of Lemma \ref{lem:optimality-keel-tao}, the inequality \eqref{eq:schatten-strichartz-mixed-2} is equivalent to 
 $$\norm{\int_\R e^{it\Delta}V(t,x)e^{-it\Delta}\,dt}_{\gS^{\alpha}(L^2_x(\R^d))}\lesssim\norm{V}_{L^{p/2}_tL^{q/2}_x(\R\times\R^d)}.$$
 We choose a trial $V$ to be of the form
 $$V(t,x)=\sum_{j=1}^N v(t-jT,x),$$
 such that
 $$\norm{V}_{L^{p/2}_tL^{q/2}_x}\simeq N^{2/p}\norm{v}_{L^{p/2}_tL^{q/2}_x}\simeq N^{2/p}.$$
 Define 
 \begin{align*}
    A_j:=\int_\R e^{it\Delta}v(t-jT,x)e^{-it\Delta}\,dt &= e^{ijT\Delta}\left(\int_\R e^{it\Delta}v(t,x)e^{-it\Delta}\,dt\right)e^{-ijT\Delta} \\
    &=e^{ijT\Delta}A_ve^{-ijT\Delta},
 \end{align*}
 and recall that $A_v\in\gS^{d+2}$ by the Strichartz inequality of \cite{FraLewLieSei-13}. Then,
 $$\tr\left(\int_\R e^{it\Delta}V(t,x)e^{-it\Delta}\,dt\right)^{d+2}=N\tr A_v^{d+2}+\sum_{(j_1,\ldots,j_{d+2})\notin D}\tr A_{j_1}\cdots A_{j_{d+2}},$$
 where
 $$D=\Big\{(j,\ldots,j)\in\{1,\ldots,N\}^{d+2},\,j=1,\ldots,N\Big\}.$$
 For any $(j_1,\ldots,j_{d+2})\notin D$, there exists $1\le k\le d+1$ such that $j_k\neq j_{k+1}$. By cyclicity of the trace, we may assume that $k=1$, so that 
 $$\tr A_{j_1}\cdots A_{j_{d+2}}=\tr A_v e^{i(j_2-j_1)T\Delta} A_v e^{-ij_2T\Delta} A_{j_3}\cdots A_{j_{d+2}}e^{ij_1T\Delta}.$$
 Since $j_2\neq j_1$, the operator $A_v e^{i(j_2-j_1)T\Delta} A_v$ converges strongly to zero in $\gS^{\frac{d+2}{2}}$ as $T\to+\ii$, by the following lemma.
 
 \begin{lemma}
  Let $A,B\in\gS^{d+2}$. Then, the operator $Ae^{it\Delta}B$ converges strongly to $0$ in $\gS^\frac{d+2}{2}$ as $t\to+\ii$.
 \end{lemma}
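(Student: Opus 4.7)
The plan is to reduce, by a density argument in $\gS^{d+2}$, to the case of finite-rank $A$ and $B$, and then invoke Riemann-Lebesgue decay of the matrix elements of the free Schr\"odinger propagator. The output of the argument will be norm convergence in $\gS^{(d+2)/2}$, which is the strongest reading of ``converges strongly to zero in $\gS^{(d+2)/2}$''.

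First I would establish the uniform bound
$$\norm{Ae^{it\Delta}B}_{\gS^{(d+2)/2}}\le\norm{A}_{\gS^{d+2}}\norm{B}_{\gS^{d+2}},\qquad t\in\R,$$
coming from the non-commutative H\"older inequality in Schatten spaces (since $\tfrac{2}{d+2}=\tfrac{1}{d+2}+\tfrac{1}{d+2}$) together with the unitarity of $e^{it\Delta}$ on $L^2_x(\R^d)$. Given $\epsilon>0$, I would then pick finite-rank operators $A_\epsilon,B_\epsilon$ with $\norm{A-A_\epsilon}_{\gS^{d+2}}+\norm{B-B_\epsilon}_{\gS^{d+2}}<\epsilon$ (using density of finite-rank operators in $\gS^{d+2}$) and split
$$Ae^{it\Delta}B-A_\epsilon e^{it\Delta}B_\epsilon=(A-A_\epsilon)e^{it\Delta}B+A_\epsilon e^{it\Delta}(B-B_\epsilon).$$
Applying the uniform bound to each term makes this difference $\lesssim\epsilon$ in $\gS^{(d+2)/2}$, independently of $t$. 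It therefore suffices to show $A_\epsilon e^{it\Delta}B_\epsilon\to 0$ in $\gS^{(d+2)/2}$ for \emph{fixed} finite-rank operators.

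By bilinearity this reduces to the rank-one case $A_\epsilon=|u\rangle\langle v|$, $B_\epsilon=|w\rangle\langle z|$, for which
$$A_\epsilon e^{it\Delta}B_\epsilon=\langle v,e^{it\Delta}w\rangle\,|u\rangle\langle z|$$
is itself rank one with Schatten norm $|\langle v,e^{it\Delta}w\rangle|\,\norm{u}\norm{z}$. The whole argument thus reduces to
$$\langle v,e^{it\Delta}w\rangle=\int_{\R^d}\overline{\hat v(\xi)}\,\hat w(\xi)\,e^{-it|\xi|^2}\,d\xi\xrightarrow[|t|\to+\infty]{}0.$$
This integral is the Fourier transform at time $t$ of the pushforward of the $L^1$-measure $\overline{\hat v}\hat w\,d\xi$ under the map $\xi\mapsto|\xi|^2\in[0,+\infty)$. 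Since this map is a submersion away from the origin, the pushforward is absolutely continuous with respect to Lebesgue measure on $[0,+\infty)$, and the Riemann-Lebesgue lemma delivers the desired decay.

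The essentially only non-trivial ingredient is this last step, which is the standard fact that $-\Delta$ has purely absolutely continuous spectrum on $L^2(\R^d)$, so that matrix elements of $e^{it\Delta}$ are Fourier transforms of absolutely continuous scalar measures. The remainder of the argument is a routine combination of density of finite-rank operators in $\gS^{d+2}$ with the H\"older inequality for Schatten norms.
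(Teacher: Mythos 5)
Your argument is correct and rests on the same two pillars as the paper's: density of finite-rank operators in $\gS^{d+2}$ (together with a uniform H\"older bound to justify passing to the limit) and the dispersive decay of the free propagator. There is a minor but instructive structural difference. The paper approximates only $B$ by a finite-rank operator, writes it as $\sum_j\lambda_j|u_j\rangle\langle v_j|$, and for each term uses that $e^{it\Delta}u_j\wto 0$ weakly in $L^2$ while $A$ is compact, so $Ae^{it\Delta}u_j\to 0$ in $L^2$ norm; the rank-one identity $\norm{|\phi\rangle\langle\psi|}_{\gS^p}=\norm{\phi}\,\norm{\psi}$ then finishes. You instead approximate \emph{both} $A$ and $B$, reduce to the fully rank-one case, and observe that $|u\rangle\langle v|\,e^{it\Delta}\,|w\rangle\langle z|=\langle v,e^{it\Delta}w\rangle\,|u\rangle\langle z|$, so everything hinges on the scalar decay $\langle v,e^{it\Delta}w\rangle\to 0$, which you prove by Riemann--Lebesgue on the pushforward of $\overline{\hat v}\hat w\,d\xi$ under $\xi\mapsto|\xi|^2$. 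These are two readings of the same underlying fact --- absolute continuity of the spectrum of $-\Delta$ --- and both yield the norm convergence in $\gS^{(d+2)/2}$ that the proposition actually uses. The paper's version is a touch more economical because compactness of $A$ does the job of your second density approximation; your version has the merit of making the scalar oscillatory-integral decay completely explicit rather than quoting the weak convergence $e^{it\Delta}u\wto 0$ as a black box.
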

 
 \begin{proof}
  We may assume that $B$ is finite-rank by density. Thus, assume $B=\sum_j\lambda_j|u_j\rangle\langle v_j|$. We have $\norm{Ae^{it\Delta}|u_j\rangle\langle v_j|}_{\gS^p}=\norm{Ae^{it\Delta}u_j}\norm{v_j}$. Now $e^{it\Delta}u_j$ goes weakly to zero in $L^2$ as $t\to+\ii$, and this implies that $Ae^{it\Delta}u_j$ goes strongly to zero in $L^2$ since $A$ is compact. 
 \end{proof}

  Since the operator $e^{-ij_2T\Delta}A_{j_3}\cdots A_{j_{d+2}}e^{ij_1T\Delta}$ is uniformly bounded in $\gS^{\frac{d+2}{d}}$, we deduce that $A_v e^{i(j_2-j_1)T\Delta} A_v e^{-ij_2T\Delta} A_{j_3}\cdots A_{j_{d+2}}e^{ij_1T\Delta}$ converges strongly to $0$ in $\gS^1$ as $T\to+\ii$. As a consequence, there exists $T=T(v,N)>1$ large enough such that
  $$\left|\sum_{(j_1,\ldots,j_{d+2})\notin D}\tr A_{j_1}\cdots A_{j_{d+2}}\right|\le \frac{N}{2}\tr A_v^{d+2}.$$
  We thus find that $N^\frac{1}{d+2}\lesssim N^\frac{2}{p}$ and hence $2/p\ge 1/(d+2)$, which proves the proposition recalling that $2/p=1-d/q$.
\end{proof}

\subsection{An application: a refined Strichartz estimate}

As was hinted in the introduction, the Schatten bounds \eqref{eq:schatten-strichartz-mixed} are equivalent to bounds on orthonormal functions: if $u_1,\ldots,u_M$ is an orthormal system in $L^2_x(\R^d)$ and if $\nu_1,\ldots,\nu_M\in\C$ are corresponding coefficients, Theorem \ref{thm:schatten-strichartz-mixed} implies that

\begin{equation}\label{eq:orthonormal-coef}
  \norm{\sum_{j=1}^M\nu_j|e^{it\Delta}u_j|^2}_{L^{(p/2)'}_tL^{(q/2)'}_x}\le C\left(\sum_{j=1}^M|\nu_j|^{\alpha'}\right)^{1/{\alpha'}}.
\end{equation}

We refer to \cite{FraSab-14} for the explanation as to why Theorem \ref{thm:schatten-strichartz-mixed} indeed implies such a bound. One way to obtain a system of orthogonal functions from a single function $u\in L^2_x(\R^d)$ is to localize it on $M$ disjoint sets in physical or Fourier variables. For instance, denoting by $(P_j)_{j\in\Z}$ the standard Littlewood-Paley multipliers for functions on $\R^d$ (see for instance \cite[Sec. 8.2]{MusSch-book}), and fixing a function $u\in L^2_x(\R^d)$, then the functions $u_j:=P_ju$ are (almost) orthogonal and we may try to apply \eqref{eq:orthonormal-coef} to it. What is interesting in this approach is that the left side of \eqref{eq:orthonormal-coef} controls the $L^{(q/2)'}_x$-norm of $e^{it\Delta}u$ by the Littewood-Paley theorem:

$$\norm{\sum_{j\in\Z}|e^{it\Delta}P_ju|^2}_{L^{(q/2)'}_x}\gtrsim\norm{e^{it\Delta}u}_{L^{2(q/2)'}_x}^2.$$

Of course, the issue with this approach is that (i) the functions $u_j=P_ju$ are not normalized in $L^2_x$, however their $L^2$-norm can be put into the coefficients $\nu_j$; (ii) the $(u_j)$ are not exactly orthogonal, but only $u_j$, $u_{j+1}$, $u_{j-1}$ have an overlap. Hence, one may still compute the Schatten norm of the operator $\sum_j|P_ju\rangle\langle P_ju|$, see \cite[Cor. 9]{FraSab-14} for details. Putting all these remarks together, we arrive at the following result \cite[Cor. 9]{FraSab-14}:

\begin{proposition}[A refined Strichartz estimate]\label{prop:refined}
 Let $d\ge1$ and $p,q\ge2$ satisfying
 $$\frac2p+\frac dq=\frac d2,\qquad 2\le q< 2+\frac{4}{d-1}.$$
 Then, there exists $C>0$ such that for any $u\in L^2_x(\R^d)$ we have
 \begin{align}\label{eq:refined1}
    \norm{e^{it\Delta}u}_{L^p_tL^q_x(\R\times\R^d)} &\le C\left(\sum_{j\in\Z}\norm{P_ju}_{L^2_x}^{4q/(q+2)}\right)^{(q+2)/(4q)}\\
    &\le C'\left(\sup_{j\in\Z}\norm{P_ju}_{L^2_x}\right)^{(q-2)/(2q)}\norm{u}_{L^2_x}^{(q+2)/(2q)}. \label{eq:refined2}
 \end{align}
\end{proposition}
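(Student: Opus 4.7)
The plan is to apply the orthonormal-function bound \eqref{eq:orthonormal-coef} to the Littlewood--Paley pieces $(P_j u)_{j\in\Z}$ of $u$, and then use the vector-valued Littlewood--Paley square function inequality to convert the resulting density-level estimate into a bound on $\norm{e^{it\Delta} u}_{L^p_t L^q_x}$.

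First I would match the exponents by setting $\tilde p := 2p/(p-2)$ and $\tilde q := 2q/(q-2)$. A direct computation shows that the hypotheses $2/p + d/q = d/2$ and $q < 2 + 4/(d-1)$ are equivalent to the Schatten--Strichartz conditions $2/\tilde p + d/\tilde q = 1$ and $\tilde q > d+1$ of Theorem \ref{thm:schatten-strichartz-mixed}; moreover $(\tilde p/2)' = p/2$, $(\tilde q/2)' = q/2$, and $\alpha = \tilde q$ yields the dual exponent $\alpha' = 2q/(q+2)$. Since $(P_j u)$ is only almost-orthogonal, I would split it into even and odd indices and exploit the disjointness of the Fourier supports of $P_{2j}$ for distinct $j$ (and similarly for odd $j$). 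Setting $\nu_j = \norm{P_j u}_{L^2_x}^2$ and using the normalized vectors $P_j u / \norm{P_j u}_{L^2_x}$, applying \eqref{eq:orthonormal-coef} to each of the two orthogonal subfamilies and summing via the triangle inequality in $L^{p/2}_t L^{q/2}_x$ would give
\begin{equation*}
  \norm{\sum_{j \in \Z} |e^{it\Delta} P_j u|^2}_{L^{p/2}_t L^{q/2}_x} \lesssim \left(\sum_{j \in \Z} \norm{P_j u}_{L^2_x}^{4q/(q+2)}\right)^{(q+2)/(2q)}.
\end{equation*}

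Since $e^{it\Delta}$ commutes with each $P_j$ and the hypothesis $p, q \ge 2$ guarantees $p/2, q/2 \ge 1$, the vector-valued Littlewood--Paley theorem yields $\norm{e^{it\Delta} u(t,\cdot)}_{L^q_x}^2 \simeq \norm{\sum_j |e^{it\Delta} P_j u|^2}_{L^{q/2}_x}$ for each $t$; taking the $L^{p/2}_t$-norm and then a square root gives \eqref{eq:refined1}. For the second inequality \eqref{eq:refined2}, I would set $r := 4q/(q+2) \in [2,4)$ and combine the elementary bound $\sum_j a_j^r \le (\sup_j a_j)^{r-2} \sum_j a_j^2$, applied to $a_j = \norm{P_j u}_{L^2_x}$, with Bessel's inequality $\sum_j \norm{P_j u}_{L^2_x}^2 \lesssim \norm{u}_{L^2_x}^2$; the identities $(r-2)/r = (q-2)/(2q)$ and $2/r = (q+2)/(2q)$ then close the argument.

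The only delicate step is the even/odd splitting needed to invoke \eqref{eq:orthonormal-coef}: the Littlewood--Paley projectors are not orthonormal, but each $P_j u$ is $L^2$-orthogonal to all $P_k u$ with $|j-k| \ge 2$, so the splitting introduces only a universal multiplicative constant and preserves the sharp exponent $\alpha' = 2q/(q+2)$. Everything else is bookkeeping of exponents.
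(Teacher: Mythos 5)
Your proposal is correct, and the exponent bookkeeping checks out: setting $\tilde p = 2p/(p-2)$, $\tilde q = 2q/(q-2)$ does convert $2/p + d/q = d/2$ into $2/\tilde p + d/\tilde q = 1$, and $q < 2 + 4/(d-1)$ into $\tilde q > d+1$, with $\alpha' = 2q/(q+2) = 4q/(q+2)\cdot\tfrac12$ exactly as needed. The Littlewood--Paley step and the final elementary inequality for \eqref{eq:refined2} are also fine. The one place where you deviate slightly from the route taken in \cite[Cor.~9]{FraSab-14} is the handling of the almost-orthogonality of $(P_j u)_j$: you split into the even and odd subfamilies, which have genuinely disjoint Fourier supports, apply the orthonormal-system bound \eqref{eq:orthonormal-coef} to each, and add the two contributions via the triangle inequality in $L^{p/2}_t L^{q/2}_x$ (which is licit since $p,q\ge 2$). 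The paper instead stays with the operator formulation and directly estimates the Schatten norm of the non-orthonormal density matrix $\gamma = \sum_j |P_j u\rangle\langle P_j u|$, using the fact that it is tridiagonal in the Littlewood--Paley basis. Both yield the same sharp exponent up to a universal constant; your even/odd split is arguably the shorter argument here and needs nothing beyond the orthonormal-function form of the inequality, while the density-matrix computation is more systematic and would extend more readily to localizations with wider (but still finite) overlap.
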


The fact that \eqref{eq:refined2} follows from \eqref{eq:refined1} is due to the fact that $\sum_j\norm{P_ju}_{L^2_x}^2$ is comparable to $\norm{u}_{L^2_x}^2$ and that $4q/(q+2)\ge2$. A similar estimate can be obtained in the range $2+4/(d-1)\le q <2+4/(d-2)$ using the other set of Schatten bounds mentioned in the previous section, obtained by interpolation with the Keel-Tao endpoint estimate.

Refined Strichartz estimates of the type \eqref{eq:refined2} appeared first in the work of Bourgain \cite{Bourgain-98} when $d=2$, and was later generalized by B\'egout-Vargas \cite{BegVar-07} for $d\ge3$ and Carles-Keraani \cite{CarKer-07} for $d=1$. It is interesting to notice that in these works, the refined estimates follow from deep bilinear estimates, for instance of Tao \cite{Tao-03}, while Proposition \ref{prop:refined} follows from a much simpler argument. On the other hand, Proposition \ref{prop:refined} does not imply the profile decomposition in the mass-critical case, because it does not detect the translation in Fourier space (Galilean boosts). 

\section{Elements of proof}\label{sec:proof}

We now turn to the proof of Theorem \ref{thm:main} and Theorem \ref{thm:schatten-strichartz-mixed}, which are both based on the same idea following the original works of Stein and Strichartz. They wanted to prove that the operator
$$T_S:L^p(\R^N)\to L^{p'}(\R^N)$$
is bounded. Their idea is to introduce a bounded, analytic family of operators $(G_z)$ depending on a complex parameter $z\in\C$ living on a strip $a\le\text{Re}\, z\le b$ for some real numbers $a<b$, which satisfies $G_c=T_S$ for some $c\in(a,b)$. The interest is to regularize the singular distribution $T_S$, which a Fourier multiplier by a distribution supported on the surface $S$, by an analytic deformation. It is practically done in the same fashion as one can regularize a delta-distribution $\delta_0$ at the origin on $\R$, by the family $x\mapsto x_+^z$ (one recovers $\delta_0$ at $z=-1$ where this distribution has a pole; however one may compensate the pole at $z=-1$ by multiplying with a well-chosen, $x$-independent function $g(z)$ like $g(z)=1/\Gamma(z)$). 

With this suitable choice of regularization, Stein and Strichartz were able to prove bounds of the type 
$$
\begin{cases}
 \norm{G_{a+is}}_{L^1\to L^\ii} \le C, \\
 \norm{G_{b+is}}_{L^2\to L^2} \le C,
 \end{cases}
$$
for all $s\in\R$, and for some $C>0$ independent of $s$. Applying Stein's interpolation theorem \cite{Stein-56} (which is a generalization of Hadamard's three-line lemma), one obtains the correct $L^p\to L^{p'}$ bound for the operator $G_c=T_S$.

We now want to derive the bounds \eqref{eq:stein-tomas-schatten-2} using similar ideas. Using the same analytic family $(G_z)$, we derive the bounds
$$
\begin{cases}
 \norm{W_1G_{a+is}W_2}_{\gS^2} \le C\norm{W_1}_{L^2}\norm{W_2}_{L^2}, \\
 \norm{W_1G_{b+is}W_2}_{L^2\to L^2} \le C\norm{W_1}_{L^\ii}\norm{W_2}_{L^\ii}.
 \end{cases}
$$
The second one follows trivially from the $L^2\to L^2$ bound on $G_{b+is}$ derived by Stein and Strichartz, while the first one is not much harder: indeed, one may express the Hilbert-Schmidt norm in $\gS^2$ in terms of the integral kernel of the considered operator:
$$\norm{W_1G_{a+is}W_2}_{\gS^2}^2=\int_{\R^N}\int_{\R^N}|W_1(x)|^2|G_{a+is}(x,y)|^2|W_2(y)|^2\,dx\,dy,$$
which together with the remark that
$$\norm{G_{a+is}}_{L^1(\R^N)\to L^\ii(\R^N)}=\norm{G_{a+is}(\cdot,\cdot)}_{L^\ii(\R^N\times\R^N)},$$
shows the desired inequality. One may then interpolate the $\gS^2$ and the $L^2\to L^2$ bounds in the same fashion as in \cite{Simon-79} to obtain $\gS^\alpha$ bounds on $W_1G_cW_2=W_1T_SW_2$ which proves Theorem \ref{thm:main} and Theorem \ref{thm:schatten-strichartz-mixed}.

As is clear from our strategy of proof, we did not use at all that we were in the context of restriction inequalities. Indeed, our strategy carries on as long as some operator $T$ is shown to be bounded from $L^p$ to $L^{p'}$ by (complex) interpolating $L^2\to L^2$ and $L^1\to L^\ii$ bounds. Our proof shows that, automatically, one may improve the $L^p\to L^{p'}$ bound into a stronger Schatten bound. In \cite{FraSab-14}, we provide other examples where we can apply the same principle, and some applications of these Schatten bounds.

\subsection*{Acknowledgments}

Lemma \ref{lem:optimality-keel-tao} and Proposition \ref{prop:new-necessary-condition} are new results and do not appear anywhere else. They have been obtained in collaboration with Mathieu Lewin.


\end{document}